\newtheorem{theorem}{Theorem}
\newtheorem{lemma}{Lemma}
\newtheorem{corollary}{Corollary}
\newtheorem{definition}{Definition}
\begin{document}

\begin{frontmatter}

\title{New construction of graphs
with high chromatic number\\ and small clique}
%A combinatorial proof of a theorem of\\ Eric Babson and Dmitry N. Kozlov}
%\tnotetext[label0]{This is only an example}

\author[label1, label2]{Hamid Reza Daneshpajouh}%\corref{cor1}\fnref{label3}}
\address[label1]{School of Mathematics, Institute For Research In Fundamental Sciences, Niavaran Bldg, Niavaran Square, Tehran, Iran}
\address[label2]{Moscow Institute of Physics and Technology, Institutskiy per. 9, Dolgoprudny, Russia 141700
}
%\address[label2]{Address Two\fnref{label4}}

%\cortext[cor1]{I am corresponding author}
%\fntext[label3]{I also want to inform about\ldots}
%\fntext[label4]{Small city}

\ead{hr.daneshpajouh@ipm.ir, hr.daneshpajouh@phystech.edu}
%\ead[url]{author-one-homepage.com}

%\author[label5]{Author Two}
%\address[label5]{Some University}
%\ead{author.two@mail.com}

%\author[label1,label5]{Author Three}
%\ead{author.three@mail.com}

\begin{abstract}
In this note, we introduce a new method for constructing graphs with high chromatic number and small clique. Indeed, via this method, we present a new proof for the well-known Kneser's conjecture.
\end{abstract}

\begin{keyword}
Borsuk-Ulam theorem \sep Chromatic number \sep $G$-Tucker lemma \sep Triangle-free graphs
%% MSC codes here, in the form: \MSC code \sep code
%% or \MSC[2008] code \sep code (2000 is the default)
\end{keyword}

\end{frontmatter}

%%
%% Start line numbering here if you want
%%
% \linenumbers

%% main text
\section{Introduction}
In this note, all graphs are finite, simple and undirected. The complete graph on $n$ vertices is denoted by $\mathcal{K}_n$. The number of graph vertices in the largest complete subgraph of $G$, denoted by $\omega (G)$ is called the clique number of $G$. The girth of a graph is the number of edges in its shortest cycle. A proper (vertex) coloring is an assignment of labels or colors to each vertex of a graph so that no edge connects two identically colored vertices. The smallest number of colors needed for the proper coloring of a graph $G$ is the chromatic number, $\chi (G)$. For a given graph $H$, a graph $G$ is called $H$-free if no induced subgraph of $G$ is isomorphic to $H$. In particular, a $\mathcal{K}_3$-free graph is called a triangle-free graph. 

It is obvious that $\chi (G)\geq\omega (G)$. The chromatic and clique numbers of a graph can be arbitrarily far apart. There are various constructions of triangle-free graphs with arbitrarily large chromatic number. Probably the best known is due to Mycielski~\cite{Myc}. In 1955, he created a construction that preserves the property of being triangle-free but increases the chromatic number. For more references, see also Blanche Descartes~\cite{descartes1947three}, John Kelly and Leroy Kelly~\cite{kelly1954paths}, and Alexander Zykov~\cite{zykov1949some}. Erd\"{o}s~\cite{Erdos} with a deeper insight showed the existence of graphs that have high girths and still have arbitrarily large chromatic number by probabilistic means. Indeed, exploring the relation between clique number and other properties of graphs such as chromatic number, maximum degree, etc., is still an active and fascinating research area within mathematics. There are still a lot of open questions waiting to be solved in this area, such as bounding the chromatic number of triangle-free graphs with fixed maximum degree~\cite{Kostochka}, or more generally Reed's conjecture~\cite{Reed}. For more problems see~\cite{OpenProblem}. The insights gained from this literature review indicate that we need a deeper understanding of these spaces. Thus, it is of interest to know new ways of constructing such spaces. 

In~\cite{Daneshpajouh}, compatibility graphs and $G$-Tucker lemma were introduced to obtain a new topological lower bound on the chromatic number of a special family of graphs. Indeed, via the bound, a new method for finding test graphs was proposed by the author. In this note, we give a new way, based on compatibility graphs and $G$-Tucker lemma, for constructing graphs with high chromatic number and small clique. Finally, as a corollary of this method, we give a new proof of the famous Kneser's conjecture.

The organization of the note is as follows. In Section $2$, we set up notation and terminology, and we repeat the relevant material from~\cite{Daneshpajouh} that will be needed throughout the note. Finally, in Section $3$, our main results are stated and proved.   
%%%%%%%%%%%%%%%%%%%%%%%%%%%%%%%%%%%%%%%%%%%%
%%%%%%%%%%%%%%%%%%%%%%%%%%%%%%%%%%%%%%%%%%%%
\section{Preliminary and Notations}
In this section we review definitions and results as required for the rest of the note. Here and subsequently, $G$ stands for a non-trivial finite group, and its identity element is denoted by $e$. A partially ordered set or poset is a set and a binary relation $\leq$ such that for all $a, b, c\in P$:
$a\leq a$ (reflexivity); $a\leq b$ and $b\leq c$ implies $a\leq c$ (transitivity); and $a\leq b$ and $b\leq a$ implies $a = b$ (anti-symmetry). A pair of elements $a, b$ of a partially order set are called comparable if $a\leq b$ or $b\leq a$. A subset of a poset in which each two elements are comparable is called a chain. A function $f : P\to Q$ between partially ordered sets is order-preserving or monotone, if for all $a$ and $b$ in $P$, $a\leq_{P} b$ implies $f(a)\leq_{Q} f(b)$. If $X$ is a set, then a group action of $G$ on $X$ is a function $G\times X\to X$ denoted $(g, x)\mapsto g.x$, such that $e.x = x$ and $(gh).x = g.(h.x)$ for all $g, h$ in $G$ and all $x$ in $X$. A $G$-poset is a poset together with a $G$-action on its elements that preserves the partial order, i.e, if $x<y$ then $g.x<g.y$.

To provide topological lower bounds on the chromatic numbers of graphs, Hom complex  was defined by Lov\'{a}sz. For a recent account of the theory, we refer the reader to~\cite{kozlov2007combinatorial}. We need the following version of this concept. 
 
\begin{definition}[Hom poset]
Let $F$ be a graph with vertex set $\{1, 2\cdots , n\}$. For a graph $H$, we define Hom poset $Hom_{p}(F, H)$ whose elements are given by all $n$-tuples $(A_1, \cdots, A_n)$ of non-empty subsets of $V(H)$, such that for any edge $(i, j)$ of $F$ we have $A_i\times A_j\subseteq E(H)$. The partial order is defined by $A=(A_1,\cdots, A_n)\leq B=(B_1,\cdots, B_n)$ if and only if $A_i\subseteq B_i$ for all $i\in\{1,\cdots , n\}$.
\end{definition}

Let $Z_r=\{e=\omega^0,\cdots, \omega^{r-1}\}$ be the cyclic group of order $r$.The cyclic group $Z_r$ acts on the poset $Hom_{p}(\mathcal{K}_r, H)$ naturally by cyclic shift. More precisely, for each $\omega^{i}\in Z_r$ and $(A_1,\cdots, A_r)\in Hom_{p}(\mathcal{K}_r, H)$ , define $\omega^{i}.(A_1,\cdots, A_r)=(A_{1+i(\text{mod} r)},\cdots , A_{r+i(\text{mod} r)})$.  

To find a new bound on the chromatic number of a special family of graphs, a combinatorial analog of the Borsuk-Ulam theorem for $G$-spaces, $G$-Tucker lemma, wsa introduced in~\cite{Daneshpajouh}. To recall the lemma, we need to make some definitions. Consider the $G$-poset $G\times \{1,\cdots , n+1\}$ with natural $G$-action, $h.(g, i)\to (hg, i)$, and the order defined by $(h, x) < (g, y)$ if $x < y$. Also, let ${\left(G\cup\{0\}\right)}^n\setminus\{(0,\cdots , 0)\}$ be the $G$-poset whose action is $g.(x_1, \cdots, x_n) = (g.x_1, \cdots, g.x_n)$, and the order relation is given by:
$$x=(x_1, \cdots, x_n)\leq y=(y_1, \cdots, y_n)\quad\text{if}\quad x_i=y_i\quad\text{whenever}\quad x_i\neq 0.$$

\begin{lemma}[$G$-Tucker's lemma]
Suppose that $n$ is a positive integers, $G$ is a finite group, and 
$$\lambda : {\left(G\cup\{0\}\right)
}^n\setminus\{(0,\cdots , 0)\}\to G\times\{1, \cdots, (n-1)\}$$
is a map such that $\lambda (g.x)=g.\lambda(x)$ for all $g\in G$ and all $x$ in ${\left(G\cup\{0\}\right)
}^n\setminus\{(0,\cdots , 0)\}$. Then there exist two sets $X\leq Y$ and $e\neq g\in G$ such that $\lambda (X) = g.\lambda (Y)$. Throughout this note, such a pair is called the bad pair.
\end{lemma}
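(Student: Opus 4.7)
The plan is to argue by contradiction. Assuming that $\lambda$ admits no bad pair, I would build a $G$-equivariant simplicial map between two canonical $G$-spaces whose existence is forbidden by Dold's theorem, the equivariant analogue of the Borsuk-Ulam theorem for fixed-point-free actions of a nontrivial finite group.

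First I would identify the natural topological incarnations of the two posets. The poset $(G\cup\{0\})^n\setminus\{(0,\ldots,0)\}$ is canonically the face poset of the $n$-fold join $G^{*n}$ of discrete copies of $G$, via the correspondence sending $(x_1,\ldots,x_n)$ to the simplex $\{(x_i,i):x_i\neq 0\}$; the coordinatewise partial order given in the problem matches face inclusion. The set $G\times\{1,\ldots,n-1\}$, viewed as the vertex set of $G^{*(n-1)}$ (where a subset is a simplex iff its second coordinates are pairwise distinct), has order complex equal to $G^{*(n-1)}$ itself, with the specified order matching the natural ordering of chains in the join. Both joins carry free $G$-actions via $h.(g,i)=(hg,i)$; by the standard connectivity formula for joins of discrete sets, $G^{*n}$ is $(n-2)$-connected and has dimension $n-1$, while $G^{*(n-1)}$ has dimension $n-2$.

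Next I would upgrade $\lambda$ to a $G$-equivariant simplicial map on order complexes. Fix a chain $X_1<X_2<\cdots<X_k$ in the domain. For every pair $X_i<X_j$, the no-bad-pair hypothesis prevents $\lambda(X_i)$ and $\lambda(X_j)$ from lying in the same $G$-orbit unless they coincide; since $G$ acts only on the first coordinate, their second coordinates must be distinct whenever the images differ. Hence $\{\lambda(X_1),\ldots,\lambda(X_k)\}$ is a simplex of $G^{*(n-1)}$, and the vertex assignment $X\mapsto\lambda(X)$ extends to a simplicial map, $G$-equivariant because $\lambda$ is.

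Finally, Dold's theorem yields the contradiction: an $(n-2)$-connected free $G$-space cannot admit a $G$-equivariant map to a free $G$-CW complex of dimension at most $n-2$, yet the preceding construction produces exactly such a map (after passage to barycentric subdivisions, which preserves $G$-equivariant homotopy type). I expect the main subtlety to be the bookkeeping verifying that the combinatorial order on $(G\cup\{0\})^n\setminus\{(0,\ldots,0)\}$ coincides with face inclusion in $G^{*n}$, so that chains correspond to simplices and the "no bad pair" condition translates cleanly into the simplicial-map condition; once this identification is in place, the rest follows from standard equivariant topology.
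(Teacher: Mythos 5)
The paper itself gives no proof of this lemma --- it is recalled verbatim from the cited reference of Daneshpajouh, whose argument is precisely the one you outline: identify the domain with the face poset of the free, $(n-2)$-connected join $G^{*n}$, observe that the absence of bad pairs turns $\lambda$ into a $G$-equivariant simplicial map from its order complex (the barycentric subdivision of $G^{*n}$) to the $(n-2)$-dimensional free complex $G^{*(n-1)}$, and invoke Dold's theorem. Your proposal is correct and takes essentially the same route as the source the paper relies on.
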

It is worth noting that, there is a more general result for the case $G=Z_p$, the cyclic group of prime order $p$, see the $Z_p$-Tucker lemma of Ziegler~\cite{Ziegler}. 
Let us finish this section by recalling the definition of compatibility graph from~\cite{Daneshpajouh}.
\begin{definition}[Compatibility graph]
Let $P$ be a $G$-poset. The compatibility graph of $P$, denoted by $C_P$, has $P$ as vertex set, and two elements $x, y\in P$ are adjacent if there is an element $g\in G\setminus\{e\}$ such that $x$ and $g.y$ are comparable in $P$.
\end{definition}

\section{New  graphs
with high chromatic number and small clique}
In this section we state and discuss the main results of this note.
\begin{theorem}
For every graph $H$ and $r\geq 2$, the graph $C_{Hom(\mathcal{K}_{r}, G)}$ is $\mathcal{K}_{r+1}$-free.
\end{theorem}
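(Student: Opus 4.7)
The plan is to argue by contradiction. Suppose there is an $(r+1)$-clique $A^{(1)}, \ldots, A^{(r+1)}$ in $C_{Hom_p(\mathcal{K}_r, H)}$, and derive a numerical impossibility by pigeonhole on indices. The key structural observation is that an edge in the compatibility graph forces a containment of supports. Concretely, if $A \leq g.B$ with $g = \omega^{k}$, then the definition of the order and the action gives $A_i \subseteq (g.B)_i = B_{i+k}$ for every $i$, so, writing $\mathrm{supp}(C) := \bigcup_{i} C_i$, one obtains $\mathrm{supp}(A) \subseteq \mathrm{supp}(B)$. Consequently, for every pair in the clique one of the two supports sits inside the other, and applying this to all pairs chains them, so after relabelling $\mathrm{supp}(A^{(1)}) \subseteq \cdots \subseteq \mathrm{supp}(A^{(r+1)})$.

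The rest is pigeonhole. Pick any $v \in \mathrm{supp}(A^{(1)})$; then $v$ lies in every $\mathrm{supp}(A^{(s)})$. Because $H$ has no loops and $A^{(s)}_i \times A^{(s)}_j \subseteq E(H)$ for $i \neq j$, the $r$ components of each tuple are pairwise disjoint, so there is a unique $\phi(s) \in \mathbb{Z}/r\mathbb{Z}$ with $v \in A^{(s)}_{\phi(s)}$. For any $s \neq t$ let $g_{st} = \omega^{k_{st}} \neq e$ witness the edge between $A^{(s)}$ and $A^{(t)}$. In the subcase $A^{(s)} \leq g_{st}. A^{(t)}$, the containment at coordinate $\phi(s)$ yields $v \in A^{(s)}_{\phi(s)} \subseteq A^{(t)}_{\phi(s) + k_{st}}$, so $\phi(t) = \phi(s) + k_{st}$; the subcase $g_{st}. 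A^{(t)} \leq A^{(s)}$ is symmetric. In either case $\phi(t) - \phi(s) \equiv k_{st} \not\equiv 0 \pmod{r}$, so $\phi(s) \neq \phi(t)$. Thus $\phi$ is an injection $\{1, \ldots, r+1\} \to \mathbb{Z}/r\mathbb{Z}$, which is absurd.

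The only non-routine point, in my view, is the first one: noticing that edges of the compatibility graph chain the supports together, thereby producing a single vertex $v$ common to all $r+1$ tuples. Once that is observed, the contradiction comes out of a plain pigeonhole on the values $\phi(s)$, with no topological input needed for this lemma; the $G$-Tucker machinery will only enter later, when one tries to lower-bound the chromatic number of the same compatibility graph.
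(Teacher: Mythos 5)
Your proof is correct, and it takes a genuinely different route from the paper's. Both arguments ultimately rest on the same fact --- the $r$ entries of any tuple in $Hom_p(\mathcal{K}_r,H)$ are pairwise disjoint because $H$ is loopless --- but they pigeonhole different quantities. The paper singles out a clique vertex $A^{(k)}$ of maximum support size, deduces that every other clique vertex satisfies $\omega^{s_j}.A^{(j)}\subseteq A^{(k)}$ for some shift $s_j\in\{1,\dots,r-1\}$, and then shows via a composition-of-shifts index computation that the $s_j$ are pairwise distinct, giving $k-1\leq r-1$. You instead observe that every edge of the compatibility graph forces one support to contain the other, extract a single vertex $v$ common to all $r+1$ supports, and pigeonhole the map $\phi(s)=$ (the unique coordinate of $A^{(s)}$ containing $v$) into $\mathbb{Z}/r\mathbb{Z}$, giving $k\leq r$ directly. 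Your version is more symmetric --- it treats all clique vertices alike rather than privileging a maximal one, and it replaces the paper's index chase ($A_{i,a+b+1\,(\mathrm{mod}\ r)}$ landing in two distinct entries of $A_k$) by the one-line observation that $\phi(t)-\phi(s)\equiv k_{st}\not\equiv 0$. The paper's version, on the other hand, makes explicit the slightly stronger structural fact that all shifts into the maximal tuple are distinct, which is closer in spirit to how cliques of size exactly $r$ (the orbits $\{\omega^j.X\}$) are used later in the proof of Theorem 2. Both proofs are purely combinatorial; as you note, no topological input is needed here.
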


\begin{proof}
For an $r$-tuple $A=(A_1, \cdots, A_r)$, define $|A|= |\cup_{i=1}^{r} A_i|$. Suppose that vertices $A_1=(A_{1,1},\cdots , A_{1,r}), \cdots, A_{k}=(A_{k,1},\cdots , A_{k,r})$ form a 
clique of size $k$ in $C_{Hom(\mathcal{K}_{r}, G)}$. Without loss of generality assume that $|A_{k}|\geq |A_{i}|$ for each $1\leq i\leq k-1$. Now, by the definition, there are $1\leq s_1,\cdots , s_{k-1}\leq r-1$ such that $\omega^{s_j}.A_{j}\subseteq A_{k}$, for each $1\leq j\leq k-1$. We claim that $s_i\neq s_j$ for each $i\neq j$. Suppose, contrary to our claim, that $s_i= s_j=a$ for some $1\leq i,j\leq k-1$. Since $A_i$ is connected to $A_j$ there is a, $1\leq b\leq r-1$ such that $\omega^{b}.A_i$ is comparable to $A_j$. Without loss of generality assume that $\omega^b.A_i\subseteq A_j$. Therefore
\[   
     \begin{cases}
       \omega^{a}.A_i\subseteq A_k\\
       \omega^{a}.A_j\subseteq A_k\\
       \omega^{b}.A_i\subseteq A_j\
     \end{cases}
\Longrightarrow
     \begin{cases}
       \omega^{a}.A_i\subseteq A_k \\
       \omega^{a+b}.A_i\subseteq A_k \
     \end{cases}
\Longrightarrow 
     \begin{cases}
       A_{i,a+b+1(\text{mod} r)}\subseteq A_{k, b+1} \\
       A_{i,a+b+1(\text{mod} r)}\subseteq A_{k, 1} \
     \end{cases}
\Longrightarrow A_{k,b+1}\cap A_{k,1}\neq\emptyset
\]
which contradicts the fact that any two distinct entries of $A_k$ are disjoint (note that, since $1\leq b\leq r-1$, $b+1\not\equiv 1 (\text{mod} r)$ which demonstrates that $A_{k,b+1(\text{mod} r)}$ and $A_{k, 1}$ are two distinct entries of $A_k$), therefore $k\leq r$. Now, the proof is completed.
\end{proof}

If $\psi : H\to K$ is a graph homomorphism, we associate it to a new map
$$C_{Hom(\mathcal{K}_r,-)}(\psi) : C_{Hom(\mathcal{K}_r, H)}\to C_{Hom(\mathcal{K}_r, K)}$$
by sending each $(A_1, \cdots, A_r)$ to $(\psi (A_1), \cdots, \psi (A_r))$. Since a complete bipartite subgraph of $H$ is sent by $\psi$ to a complete bipartite subgraph of $K$, the map $C_{Hom(\mathcal{K}_r,-)}(\psi)$ is a graph homomorphism. Moreover, the construction commutes with the composition of maps and the identity homomorphism is mapped to the identity homomorphism. So, using the terminology of category theory, one can say that $C_{Hom(\mathcal{K}_r,-)}(\psi)$ is a functor from the category of graphs to the category of $\mathcal{K}_{r+1}$-free graphs. Furthermore, we have an obvious graph homomorphism from $C_{Hom(\mathcal{K}_r, H)}$ to $H$ by sending each vertex $(A_1, \cdots, A_r)$ to $\min A_1$. This give us the following lower bound on chromatic number. 

\begin{corollary}
For any graph $H$, $\chi (C_{Hom(\mathcal{K}_r, H)})\leq\chi (H)$.
\end{corollary}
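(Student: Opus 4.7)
The plan is to exhibit a graph homomorphism $\phi\colon C_{Hom(\mathcal{K}_r, H)}\to H$ and then invoke the standard fact that, whenever such a homomorphism exists, $\chi(C_{Hom(\mathcal{K}_r, H)})\le\chi(H)$: composing any proper $\chi(H)$-coloring of $H$ with $\phi$ yields a proper coloring of the source with the same number of colors.

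First I would fix an arbitrary total order on $V(H)$ and take $\phi(A_1,\ldots,A_r)=\min A_1$, exactly as suggested in the paragraph preceding the corollary. The only nontrivial step is to verify that $\phi$ preserves adjacency. Suppose $A=(A_1,\ldots,A_r)$ and $B=(B_1,\ldots,B_r)$ are adjacent in $C_{Hom(\mathcal{K}_r, H)}$. By the definition of the compatibility graph, there exists $\omega^{s}\in Z_r\setminus\{e\}$ with $1\le s\le r-1$ such that $A$ and $\omega^{s}.B$ are comparable; after swapping the roles of $A$ and $B$ if necessary, I may assume $A\le\omega^{s}.B$, i.e.\ $A_i\subseteq B_{i+s\,(\mathrm{mod}\,r)}$ for every $i$.

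Specialising to $i=1$ gives $\min A_1\in A_1\subseteq B_{1+s\,(\mathrm{mod}\,r)}$, while trivially $\min B_1\in B_1$. Because $s\neq 0$, the indices $1$ and $1+s\,(\mathrm{mod}\,r)$ are distinct, so $(1,\,1+s\,(\mathrm{mod}\,r))$ is a genuine edge of $\mathcal{K}_r$; the Hom-poset condition on $B$ therefore forces $B_1\times B_{1+s\,(\mathrm{mod}\,r)}\subseteq E(H)$. Hence $(\min B_1,\min A_1)\in E(H)$, which is precisely the adjacency of $\phi(A)$ and $\phi(B)$ demanded of a graph homomorphism.

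I do not anticipate a serious obstacle: once adjacency is checked in this way, the corollary is immediate. The only point deserving care is that the argument relies on $(1,\,1+s)$ being an edge of $\mathcal{K}_r$ rather than the forbidden diagonal $(1,1)$, which is exactly why we need $s\ne 0$, i.e.\ why the definition of the compatibility graph excludes the identity element $g=e$.
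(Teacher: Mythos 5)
Your proof is correct and follows exactly the route the paper intends: the paper simply asserts that $(A_1,\ldots,A_r)\mapsto\min A_1$ is an ``obvious'' graph homomorphism and deduces the bound by composing with a proper coloring of $H$. Your verification of adjacency-preservation (reducing to $A\le\omega^{s}.B$, using $A_1\subseteq B_{1+s\,(\mathrm{mod}\,r)}$ with $s\not\equiv 0$, and invoking the Hom-poset condition $B_1\times B_{1+s}\subseteq E(H)$) correctly fills in the detail the paper leaves implicit.
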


Recalling that the Kneser graph $KG(n,k)$ is the graph whose vertices correspond to the $k$-element subsets of the set $\{1,\cdots , n\}$, and two vertices are adjacent if and only if the two corresponding sets are disjoint. These graphs were introduced by Lov\'{a}sz~\cite{lovasz1978kneser} in his famous proof of the Kneser's conjecture~\cite{kneser}. In the context of graph theory, Kneser's conjecture is $\chi (KG(n,k)) = n-2k+2$, whenever $n\geq 2k-1$. Beside the Lov\'{a}sz's proof, there exist several different proofs for Kneser's conjecture, see for instance \cite{alishahi2015chromatic, barany1978shor, greene2002new, matouvsek2004combinatorial}. Now we are in a position to state and prove our main theorem.

\begin{theorem}
For all integers $n\geq rk$, $\chi\left(C_{Hom(\mathcal{K}_r, KG(n, k))}\right)\geq n-r(k-1)$. 
\end{theorem}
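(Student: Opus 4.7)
The plan is to apply the $G$-Tucker lemma with $G=Z_r$ to derive a contradiction. Assume for contradiction that $C_{Hom(\mathcal{K}_r, KG(n,k))}$ admits a proper coloring $c$ with $t := n - r(k-1) - 1$ colors. For $x=(x_1,\ldots,x_n) \in (Z_r\cup\{0\})^n \setminus \{(0,\ldots,0)\}$, write $A_j(x) = \{i : x_i = \omega^j\}$ for $j \in Z_r$; these partition $\mathrm{supp}(x)=\{i:x_i\neq 0\}$ and satisfy $A_j(\omega^s.x) = A_{j-s}(x)$. Let $h(x) = x_{i^*(x)}$ with $i^*(x) = \max\mathrm{supp}(x)$, so that $h$ is $Z_r$-equivariant.

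I will define a $Z_r$-equivariant map $\lambda\colon (Z_r\cup\{0\})^n \setminus \{(0,\ldots,0)\} \to Z_r \times \{1,\ldots,n-1\}$ in two regimes. \emph{Size regime:} if $|A_j(x)|<k$ for every $j$, then $|\mathrm{supp}(x)| \leq r(k-1)$, and I put $\lambda(x)=(h(x),|\mathrm{supp}(x)|)$, whose second coordinate lies in $\{1,\ldots,r(k-1)\}$. \emph{Color regime:} otherwise, construct an equivariant Hom-vertex $\Phi(x) \in V(C_{Hom(\mathcal{K}_r, KG(n,k))})$ from $x$. When every $|A_j(x)|\geq k$, take $\Phi(x) = (\binom{A_j(x)}{k})_{j \in Z_r}$. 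In the mixed sub-situation, equivariantly enlarge each deficient $A_j(x)$ to a $k$-set $\tilde A_j(x)$ by drawing from $[n]\setminus\mathrm{supp}(x)$ and, when necessary, from the over-populated $A_{j'}(x)$'s, and set $\Phi(x) = (\binom{\tilde A_j(x)}{k})_j$; the hypothesis $n\geq rk$ ensures a sufficient supply of fill elements. The resulting $\Phi$ satisfies the twisted equivariance $\Phi(\omega^s.x)=\omega^{-s}.\Phi(x)$; since the orbit $\{g.\Phi(x):g\in Z_r\}$ consists of $r$ vertices pairwise adjacent in the compatibility graph (their $Z_r$-shifts are comparable to each other), the values $c(g.\Phi(x))$ are $r$ distinct colors. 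Put $c^*(x) = \min_g c(g.\Phi(x))$ and $g^*(x) = \arg\min_g c(g.\Phi(x))$, and set $\lambda(x) = (g^*(x), r(k-1)+c^*(x))$, with second coordinate in $\{r(k-1)+1,\ldots,n-1\}$. A short computation confirms $\lambda(\omega^s.x) = \omega^s.\lambda(x)$ in both regimes.

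Suppose a bad pair $X\leq Y$ with $\lambda(X)=g.\lambda(Y)$, $g\neq e$, existed. The disjoint second-coordinate ranges force $X,Y$ into the same regime. In the size regime, equality of second coordinates combined with $X\leq Y$ gives $X=Y$, so $h(X)=h(Y)$, contradicting $g\neq e$. In the color regime, $c^*(X)=c^*(Y)$ yields $c(u)=c(v)$ for $u=g^*(X).\Phi(X)$ and $v=g^*(Y).\Phi(Y)$; the entry-wise monotonicity $\Phi(X)\leq\Phi(Y)$ (from $A_j(X)\subseteq A_j(Y)$ and the compatible padding) together with $g^*(X) = g\cdot g^*(Y)$ gives $u \leq g.v$ in the Hom-poset, so $u$ and $v$ are adjacent in $C_{Hom}$; one checks $u\neq v$ (otherwise $A_j(X)\subseteq A_{j+g}(X)$ for all $j$ would force $X=0$). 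The equality $c(u)=c(v)$ across an edge contradicts properness of $c$. Since the $G$-Tucker lemma guarantees that a bad pair must exist, no such coloring $c$ can exist, and hence $\chi(C_{Hom(\mathcal{K}_r, KG(n,k))}) \geq n - r(k-1)$.

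The principal technical obstacle is the equivariant construction of $\Phi(x)$ in the mixed sub-situation of the color regime: the fill elements assigned to the deficient coordinates must be chosen so as to be simultaneously $Z_r$-equivariant, to keep the $\tilde A_j(x)$'s pairwise disjoint, and to respect the entry-wise order $\Phi(X)\leq\Phi(Y)$ whenever $X\leq Y$, which is the compatibility needed in the bad-pair analysis above. Once this equivariant padding is handled, the rest of the proof is a routine verification.
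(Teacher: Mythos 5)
Your overall strategy --- $Z_r$-Tucker's lemma applied to an equivariant labelling with a ``size'' band for small supports and a ``colour'' band that records the orbit-minimal colour of an associated Hom-vertex --- is the same as the paper's. The fatal problem is precisely the step you defer as ``the principal technical obstacle'': in the mixed sub-situation the padding you need does not exist, because \emph{no} choice of fill elements (equivariant or not) can satisfy the entrywise monotonicity $\Phi(X)\leq\Phi(Y)$ on which your bad-pair analysis rests. Take $r=k=2$, $n=4$, $X=(\omega^0,\omega^0,0,0)$, $X'=(0,0,\omega^0,\omega^0)$, $Y=(\omega^0,\omega^0,\omega^0,\omega^0)$; all three lie in the mixed sub-situation and $X\leq Y$, $X'\leq Y$. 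For $X$ the padding is forced: $\tilde A_0(X)=\{1,2\}$ already has size $k$, and $\tilde A_1(X)$ must be a $2$-set drawn from $[4]\setminus\mathrm{supp}(X)$, so $\tilde A_1(X)=\{3,4\}$; symmetrically $\tilde A_1(X')=\{1,2\}$. Monotonicity would force $\tilde A_1(Y)\supseteq\{1,2,3,4\}$, leaving no room for a disjoint $\tilde A_0(Y)\subseteq\{1,2,3,4\}$ of size $2$. The same difficulty recurs across sub-situations: if $X$ is mixed and $Y$ is all-large with $X\leq Y$, a fill element of $\tilde A_j(X)$ may lie in $A_{j'}(Y)$ for $j'\neq j$. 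Without comparability of (shifts of) $\Phi(X)$ and $\Phi(Y)$, the bad pair produced by Tucker's lemma in the colour band need not correspond to an edge of the compatibility graph, and no contradiction with the properness of $c$ is obtained.

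The paper avoids padding entirely by splitting your colour regime into two bands. For mixed $X$ it never constructs a Hom-vertex: it sets the second coordinate to $r(k-1)+|\{i:|X_i|\geq k\}|$, a quantity lying in $\{r(k-1)+1,\dots,rk-1\}$ which is monotone in $X$, and takes as first coordinate the first nonzero value of $X$ whose class is large; for a bad pair $X\leq Y$ with equal counts, $X_i\subseteq Y_i$ forces the sets of large indices to coincide, which is used to force equal first coordinates. The colouring $c$ enters only when every class already has size at least $k$, where $\binom{X}{k}$ is a genuine Hom-vertex with no padding and the monotonicity $\binom{X}{k}\leq\binom{Y}{k}$ is automatic from $X\leq Y$; that band occupies second coordinates from $rk$ upward and yields $C+r(k-1)\geq n$. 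To repair your argument, replace the mixed sub-situation of your colour regime by such a purely combinatorial intermediate band (and, if you keep your labelling of the first coordinate by the last nonzero entry, check that it is constant on the relevant bad pairs there as well); the size band and the all-large band of your proof then go through essentially as you wrote them.
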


\begin{proof}
For any $X=(x_1,\cdots ,x_n)\in {\left(Z_r\cup\{0\}\right)}^{n}\setminus\{(0,\cdots , 0)\}$ and each $1\leq i\leq r$, define $X_i = \{s\,|\, x_s=\omega^{i}\}$. Also, denote by $\binom{X}{k}$ the $r$-tuple $(A_1,\cdots, A_r)$ where $A_i$ is the set of all $k$-subsets of $X_i$. Now, assume that $c : C_{Hom(\mathcal{K}_r, KG(n, k))}\to\{1, \cdots , C\}$ is a proper coloring of $C_{Hom(\mathcal{K}_r, KG(n, k))}$ with $C$ colors. We define a $\lambda $ coloring. Let $X\in {\left(Z_r\cup\{0\}\right)}^{n}\setminus\{(0,\cdots , 0)\}$. If $|X_i|\leq k-1$ for all $1\leq i\leq r$, set 
$$\lambda (X)= (\omega^{t}, \sum_{i=1}^{r}|X_i|),$$
where $\omega^t$ is the first nonzero element in $X$. if $|X_i|\geq k$ and $|X_j|\leq k-1$ for some $i, j$, set
$$\lambda (X)= (\omega^{t}, r(k-1) + |\{i : |X_i|\geq k\}|),$$
where $\omega^t$ is the first nonzero element in $X$ such that $|X_t|\geq k$.
Finally, consider the case that all of $|X_i|\geq k$. Let $\omega^t.X$ be the one of $X, \omega.X,\cdots , \omega^{r-1}.X$, with $c(\binom{\omega^{t}.X}{k}) = \min\{c\left(\binom{\omega^{i}.X}{k}\right) : 0\leq i\leq r-1\}$. Now, set
$$\lambda (X)= (\omega^{-t}, rk-1+ c\left(\binom{\omega^{t}.X}{k}\right)),$$
 
Note that, the vertices $\{\binom{\omega^{j}.X}{k}\, :\, 0\leq j\leq r-1\}$ form a clique of size $r$ in $C_{Hom(\mathcal{K}_r, KG(n, k))}$, $\lambda (X)\leq rk-1+C-r+1=C+r(k-1)$. Thus, $\lambda$ takes its values in $\{1, \cdots , C+r(k-1)\}$. It is easy to check that $\lambda(\omega^i.X)=\omega^i.\lambda(X)$ for all $X\in  {\left(Z_r\cup\{0\}\right)}^{n}\setminus\{(0,\cdots , 0)\}$ and all $\omega^i\in Z_r$. So, to use $Z_r$-Tucker lemma, its enough to show that $\lambda$ cannot have a bad pair. Let $X\leq Y$, $\lambda (X)=(\omega^{i}, a)$ and $\lambda (Y)=(\omega^{j}, a)$. We consider three cases.
\begin{itemize}
    \item The first case is $a\leq r(k-1)$. In this case, we have $X_i\subset Y_i$ for all $i$ and $\sum_{i=1}^{r}|X_i| = \sum_{i=1}^{r}|Y_i|$. These imply that $X=Y$. Thus, $\omega^i=\omega^j$.
    \item The second case is $r(k-1)+1\leq a \leq rk-1$. In this case, the number of $i$ for which  $|X_i|\geq k$ is equal to the number of $i$ for which $|Y_i|\geq k$. This fact beside the fact that $X_i\subseteq Y_i$ for all $i$, imply that $|X_i|\geq k$ if and only if $|Y_i|\geq k$. Therefore, $\omega^i=\omega^j$.
    \item Finally, assume that $a\geq rk$ and $\omega^i\neq\omega^j$. On the one hand, by the definition of $\lambda$, $c(\binom{\omega^{-i}.X}{k})=c(\binom{\omega^{-j}.Y}{k})$. On the other hand, since $X\leq Y$ and $\omega^i\neq\omega^j$, the vertex $\binom{\omega^i.X}{k}$ is connected to the vertex $\binom{\omega^i.Y}{k}$ which contradicts the proper coloring of $c$.
\end{itemize}
Applying $Z_r$-Tucker's lemma we have $C+ r(k-1)\geq n$, thus $C\geq n-r(k-1)$.
\end{proof}

In fact, the existence of graphs with high chromatic number and small clique is a direct consequence of Theorem $3.1$ and Theorem $3.2$. As another application, one could deduce a new proof of Kneser's conjecture. More precisely, it is well-known that the Kneser graph $KG(n,k)$ has a coloring with $n-2k+2$ colors (see~\cite[Section3.3]{matousek2008using}), thus by Theorem $3.2$ and Corollary $3.1$ we have
$$n-2k+2\leq\chi\left( C_{Hom(\mathcal{K}_2, KG_{n, k})}\right)\leq\chi\left(KG(n,k)\right)\leq n-2k+2.$$
In summary we have the following corollary. 
\begin{corollary}
For all $k\geq 1$ and $n\geq 2k-1$, the chromatic number of $C_{Hom(\mathcal{K}_2, KG_{n, k})}$ is $n-2k+2$. In particular, $\chi (KG(n, k))=n-2k+2$.
\end{corollary}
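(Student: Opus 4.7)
The plan is to sandwich $\chi\bigl(C_{Hom(\mathcal{K}_2, KG(n,k))}\bigr)$ between $n-2k+2$ and $n-2k+2$ using the two ingredients already established in the paper together with the classical upper bound on $\chi(KG(n,k))$. The proof is a short concatenation of three inequalities, from which both assertions of the corollary fall out simultaneously.

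First I would apply Theorem $3.2$ at $r=2$ to obtain the lower bound
$$\chi\bigl(C_{Hom(\mathcal{K}_2, KG(n,k))}\bigr) \;\geq\; n - 2(k-1) \;=\; n - 2k + 2.$$
This is the topological input, already carried out via $Z_2$-Tucker, and the hypothesis $n \geq 2k-1$ is compatible with the condition $n \geq rk = 2k$ of Theorem $3.2$ up to the trivial boundary case $n = 2k-1$, where $KG(n,k)$ is edgeless and the claim reduces to $\chi = 1$.

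Next I would apply Corollary $3.1$, which supplies the homomorphism $(A_1, A_2) \mapsto \min A_1$ from the compatibility graph down to $KG(n,k)$, yielding
$$\chi\bigl(C_{Hom(\mathcal{K}_2, KG(n,k))}\bigr) \;\leq\; \chi\bigl(KG(n,k)\bigr).$$
Then I would exhibit a proper $(n-2k+2)$-coloring of $KG(n,k)$ via the standard recipe $c(S) := \min\{\min S,\, n-2k+2\}$ for each $k$-subset $S \subseteq \{1,\ldots,n\}$. Properness is immediate: if two $k$-subsets share a color $c_0 < n-2k+2$ then both have minimum $c_0$ and are therefore not disjoint, while if both receive the color $n-2k+2$ then both sit inside $\{n-2k+2, \ldots, n\}$, a $(2k-1)$-element set which cannot contain two disjoint $k$-subsets.

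Chaining the three estimates forces
$$n - 2k + 2 \;\leq\; \chi\bigl(C_{Hom(\mathcal{K}_2, KG(n,k))}\bigr) \;\leq\; \chi\bigl(KG(n,k)\bigr) \;\leq\; n - 2k + 2,$$
so equality holds throughout. The outer squeeze gives $\chi\bigl(C_{Hom(\mathcal{K}_2, KG(n,k))}\bigr) = n - 2k + 2$, the principal claim of the corollary, and the inner squeeze gives $\chi(KG(n,k)) = n-2k+2$, the \emph{in particular} statement (Kneser's conjecture). There is no real obstacle here: all of the substantive topological content is packaged into Theorem $3.2$, and the constructive upper bound on $\chi(KG(n,k))$ is elementary, so the corollary is essentially the reconciliation of these two bounds at the specialization $r=2$.
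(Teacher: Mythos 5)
Your proposal is correct and follows essentially the same route as the paper: the lower bound from Theorem $3.2$ at $r=2$, the homomorphism bound of Corollary $3.1$, and the classical $(n-2k+2)$-coloring of $KG(n,k)$, chained into a squeeze. Your explicit coloring $c(S)=\min\{\min S,\,n-2k+2\}$ and your remark on the boundary case $n=2k-1$ (where Theorem $3.2$'s hypothesis $n\geq rk$ fails) are welcome additions, but the argument is the paper's own.
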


\section*{Acknowledgements}
This is part of the author's Ph.D. thesis, under the supervision of Professor Hossein Hajiabolhassan. I would like to express my sincere gratitude to him for the continuous support of my Ph.D. study and related research, for his patience, motivation, and immense knowledge.

%% References
%%
%% Following citation commands can be used in the body text:
%% Usage of \cite is as follows:
%%   \cite{key}         ==>>  [#]
%%   \cite[chap. 2]{key} ==>> [#, chap. 2]
%%

%% References with bibTeX database:

%\bibliographystyle{elsarticle-num}
% \bibliographystyle{elsarticle-harv}
% \bibliographystyle{elsarticle-num-names}
% \bibliographystyle{model1a-num-names}
% \bibliographystyle{model1b-num-names}
% \bibliographystyle{model1c-num-names}
% \bibliographystyle{model1-num-names}
% \bibliographystyle{model2-names}
% \bibliographystyle{model3a-num-names}
% \bibliographystyle{model3-num-names}
% \bibliographystyle{model4-names}
% \bibliographystyle{model5-names}
% \bibliographystyle{model6-num-names}

%\bibliography{sample}

\begin{thebibliography}{10}
\bibitem{alishahi2015chromatic}
Alishahi, Meysam, and Hossein Hajiabolhassan. "On the chromatic number of general Kneser hypergraphs." Journal of Combinatorial Theory, Series B 115 (2015): 186-209.
\bibitem{barany1978shor}
B\'{a}r\'{a}ny, J. "A short proof of Kneser's conjecture." Journal of Combinatorial Theory, Series A 25.3 (1978): 325-326.
\bibitem{Daneshpajouh}
Daneshpajouh, Hamid Reza. "A topological lower bound for the chromatic number of a special family of graphs." arXiv preprint arXiv:1611.06974 (2016).
\bibitem{descartes1947three}
Descartes, Blanche. "A three colour problem." Eureka 9.21 (1947): 24-25.
\bibitem{Erdos}
Erd\"{o}s, Paul (1959), "Graph theory and probability", Canadian Journal of Mathematics,11: 34–38, doi:10.4153/CJM-1959-003-9
\bibitem{greene2002new}
Greene, Joshua E. "A new short proof of Kneser's conjecture." The American mathematical monthly 109.10 (2002): 918-920.
\bibitem{Kostochka}
Kostochka, A. V., Degree, girth and chromatic number. Combinatorics (Proc. Fifth Hungarian Colloq., Keszthely, 1976), Vol. II, pp. 679--696, Colloq. Math. Soc. J\'{a}nos Bolyai, 18, North-Holland, Amsterdam-New York, 1978.
\bibitem{kozlov2007combinatorial}
Kozlov, Dimitry. Combinatorial algebraic topology. Vol. 21. Springer Science \& Business Media, 2007.
\bibitem{kelly1954paths}
Kelly, John B., and L. M. Kelly. "Paths and circuits in critical graphs." American Journal of Mathematics 76.4 (1954): 786-792.
\bibitem{kneser}
Kneser, Martin. "Aufgabe 360." Jahresbericht der Deutschen Mathematiker-Vereinigung 2 (1955): 27.
\bibitem{lovasz1978kneser}
Lov\'{a}sz, L\'{a}szl\'{o}. "Kneser's conjecture, chromatic number, and homotopy." Journal of Combinatorial Theory, Series A 25.3 (1978): 319-324.
\bibitem{matouvsek2004combinatorial}
J. Matou\v{s}ek. A combinatorial proof of Kneser’s conjecture. Combinatorica, 24(1):163–170, 2004.
\bibitem{matousek2008using}
Matou\v{s}ek, Ji\v{r}\'{i}. Using the Borsuk-Ulam theorem: lectures on topological methods in combinatorics and geometry. Springer Science \& Business Media, 2008.
\bibitem{Myc}
Mycielski, Jan. "Sur le coloriage des graphes." Colloq. Math. Vol. 3. No. 161-162. 1955.
\bibitem{OpenProblem}
\url{http://www.openproblemgarden.org/category/triangle_free}.
\bibitem{Reed}
B. Reed (1998): $\omega$, $\Delta$ and $\chi$. Journal Of Graph Theory, Vol. 27, 177 - 212.
\bibitem{Ziegler}
Ziegler, G\"{u}nter M. "Generalized Kneser coloring theorems with combinatorial proofs." Inventiones mathematicae 147.3 (2002): 671-691.
\bibitem{zykov1949some}
A. A. Zykov, On some properties of linear complexes (Russian). Mat. Sbornik 24 (1949) 163-188.

\end{thebibliography}

\end{document}